\newtheorem{newthm}{Theorem}
\newtheorem{theorem}{Theorem}[section]
\newtheorem{lemma}[theorem]{Lemma}
\newtheorem{proposition}[theorem]{Proposition}
\newtheorem{corollary}[theorem]{Corollary}
\newtheorem{definition}[theorem]{Definition}
\theoremstyle{remark}
\theoremstyle{plain}
\numberwithin{equation}{section}
\newcommand{\wt}{\widetilde}
\def\BBB{{\cal B}}
\def\FFF{{\cal F}}
\def\JJJ{{\cal J}}
\def\NNN{{\cal N}}
\def\UUU{{\cal U}}
\def\XXX{{\cal X}}
\def\YYY{{\cal Y}}
\def\g{\gamma}
\def\De{\Delta}
\def\de{\delta}
\def\R{\mbox{$\mathbb R$}}
\def\C{\mbox{$\mathbb C$}}
\def\D{\mathbb D}
\def\N{\mbox{$\mathbb N$}}
\def\lv{ \left(\begin{matrix} }
 \def\rv{\end{matrix}\right)}
\def\cal{\mathcal}
\def\dw{{\dw}}
\newcommand{\mylabel}[1]{\label{#1}}
\newcommand{\REFEQN}[1] { \begin{equation}\mylabel{#1} }
\newcommand{\ENDEQN}{\end{equation}}
\newcommand{\REFTHM}[1] { \begin{theorem}\mylabel{#1} }
\newcommand{\ENDTHM}{\end{theorem}}
\newcommand{\REFNTH}[1] { \begin{newthm}\mylabel{#1} }
\newcommand{\ENDNTH}{\end{newthm}}
\newcommand{\REFPROP}[1]{\begin{proposition}\mylabel{#1} }
\newcommand{\ENDPROP}{\end{proposition} }
\newcommand{\REFLEM}[1]{\begin{lemma}\mylabel{#1} }
\newcommand{\ENDLEM}{\end{lemma} }
\newcommand{\REFCOR}[1]{\begin{corollary}\mylabel{#1} }
\newcommand{\ENDCOR}{\end{corollary} }
\def\pf{postcritically-finite }
\def\ov{\overline}
\tikzstyle{every picture}=[> = to]
\tikzset{cdlabel/.style={execute at begin node=$\scriptstyle,execute at end node=$}}
\tikzset{implication/.style={double equal sign distance, -implies}}
\tikzset{biimplication/.style={double equal sign distance, implies-implies}}
\title{Density of hyperbolicity of real Newton maps}
\author{Yan Gao}
\date{\today}
\begin{document}
\maketitle
\begin{abstract}
In this paper, based on the known rigidity theorems of Newton maps (\cite{DS}, \cite{RYZ}) and real polynomials (\cite{KSS1}, \cite{CS}), we prove the density of hyperbolicity in the family of real Newton maps of degree $d\geq3$ with all free critical points real.

\vspace{0.1cm}
\noindent{\bf Keywords and phrases}: density of hyperbolicity,  Newton graph, rigidity,  Newton map.

\vspace{0.1cm}

\noindent{\bf AMS(2010) Subject Classification}: 37B40, 37F10, 37F20.
\end{abstract}
\section{Introduction}

Recall that a rational map of degree more than one is called \emph{hyperbolic} if  every critical point belongs to the
Fatou set and is attracted to an attracting cycle. One central conjecture in the complex dynamics is the following.

\noindent HYPERBOLICITY CONJECTURE. \emph{Each rational map can be approximated by hyperbolic rational maps with the same degree.}

We briefly recall the recent progress on this problem. In the real setting, Lyubich
\cite{L} and Graczyk-\'{S}wiatek \cite{GS} independently proved that any real quadratic polynomial can be approximated by real quadratic hyperbolic polynomials; in general real polynomial family, Kozlovski \emph{et al} solved this problem for real polynomial with real critical points in \cite{KSS1} and for general interval maps and
circle maps in \cite{KSS2}; in \cite{RvS}, Rempe-Gillen and van Strien proved the density of hyperbolicity in some spaces of real transcendental entire functions. In the complex case, Kozlovski-van Strien \cite{KvS} proved that any non-renormalizable polynomial which has only hyperbolic periodic points can be approximated by a hyperbolic polynomial of the same degree; while W. Peng \emph{et al} proved the density of hyperbolicity in the family of rational maps with Cantor Julia set \cite{PYZ}.  M. Aspenberg \cite{AR,AR2}  shows that any Misiurewicz rational map  can be approximated by hyperbolic rational maps.

  Each of works mentioned above, except the last one, relies on a so-called Rigidity Theorem in each of their corresponding families. In particular, the proof of density of hyperbolicity in the real polynomial family
is based on the following rigidity theorem (see \cite[Rigidity Theorem]{KSS1} and \cite[Theorem 1.1]{CS}).

\noindent{\bf Theorem A.}
\emph{Let $f,\wt{f}$ be two real polynomials of degree $d$ with all critical points real, and
 topologically conjugate on $\R$. Moreover, assume that the topological conjugacy is a bijection
between
\begin{itemize}
\item the sets of parabolic periodic points,
\item the sets of critical points,
\end{itemize}
and that the orders of corresponding critical points are the same. Then $f$ and $\wt{f}$ are quasi-conformally conjugate on $\C$.}

Recently,  Drach-Schleicher \cite{DS} and Roesch-Yin-Zeng \cite{RYZ} independently proved a rigidity result in the family of \emph{Newton maps}, i.e., the maps with the form
\[f_p(z)=z-\frac{p(z)}{p'(z)}\text{ for all $z\in\C$},\]
where $p(z)$ is a polynomial.
Such maps are well known because their iterations are used to find the
roots of the polynomial $p$: each root of $p$ is an attracting fixed point of $f_p$.




\noindent{\bf Theorem B} (D-S and R-Y-Z).  \emph{If two  Newton maps are
combinatorially equivalent, and
\begin{enumerate}
\item  either they are both non-renormalizable,
\item or they are both renormalizable, and there is a bijection between their domains of
renormalization that respects quasi-conformal equivalence between the little Julia sets as well
as their combinatorial position.
\end{enumerate}
Then  they are quasi-conformally conjugate in a neighborhood of
the Julia set, and the domain of this quasiconformal conjugation can be chosen to include all Fatou components not in the basin of roots.
Furthermore,  invariant line fields of a Newton map only possibly exist at the little Julia sets coming from renormalization}.

Roughly speaking, two Newton maps are combinatorial equivalent means  all the
components of  their basins of roots  are connected to each other in the same way (see Section \ref{sec:combinatorial} for the precise definition); and the dynamics on the complement  of the basins of roots, if non-trivial,  is considered as the renormalization part, which essentially consists of finite hybrid embedded polynomials (see Section \ref{sec:puzzle-renormalization}). Therefore, the combinatorial equivalence gives no information of the renormalization part. It implies, to obtain a global rigidity, one has to add a local rigidity on the renormalization part, as stated in Theorem B.(2).

A critical points of a Newton map $f_p$ is called \emph{free} if it is not iterated to the basins of roots of $p$; and called \emph{renormalizable} if it lies in a little filled-in Julia set coming from renormalization (see Section \ref{sec:puzzle-renormalization}). It is clear that renormalizable critical points are always free. Combining Theorem A and B, it is easy to get a rigidity theorem for real (coefficients) Newton maps with only real renormalizable critical points.

\begin{proposition}\label{pro:rigidity}
 If two real Newton maps with only real renormalizable critical points are combinatorial equivalent, have a bijection between their parabolic points and have the same kneading sequences, then they are quasi-conformally conjugate in a neighborhood of
the Julia set, and the domain of this quasiconformal conjugation can be chosen to include all Fatou components not in the basin of  roots. Furthermore, any Newton map with only real renormalizable critical points carries no invariant line fields on the Julia sets.
\end{proposition}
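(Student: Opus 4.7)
The strategy is to reduce to Theorem B.(2) via the renormalization decomposition. Denote the two Newton maps by $f$ and $\wt f$. By Theorem B it is enough to exhibit, for each pair of renormalization domains of $f$ and $\wt f$ matched by the combinatorial equivalence, a quasiconformal equivalence between the associated little Julia sets that respects combinatorial position. The first observation is that every renormalization domain $U$ of $f$ is symmetric under complex conjugation: since $f$ has real coefficients, complex conjugation permutes the renormalization domains, and any polynomial-like map of degree $\geq 2$ contains a critical point in its filled-in Julia set; for $f|_U$ this critical point is renormalizable, hence real by hypothesis, so $U\cap\R\neq\emptyset$ and $U$ must be fixed by conjugation. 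Thus $f|_U\colon U\to V$ commutes with $z\mapsto\bar z$.

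Next I would apply the Douady--Hubbard straightening theorem equivariantly with respect to complex conjugation to obtain a hybrid conjugacy from $f|_U$ to a real polynomial $P$ of the same degree. Because every critical point of $f|_U$ is renormalizable and hence real, $P$ has all critical points real. The same construction on $\wt f|_{\wt U}$ yields a real polynomial $\wt P$. The three hypotheses on $f$ and $\wt f$ then translate into the hypotheses of Theorem A for the pair $(P,\wt P)$: the combinatorial equivalence provides a bijection between the critical points of $P$ and $\wt P$ with matching orders; the bijection of parabolic periodic points descends to one between $P$ and $\wt P$; and the equality of kneading sequences at the renormalizable critical points yields a topological conjugacy between $P|_\R$ and $\wt P|_\R$. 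Theorem A then delivers a qc conjugacy $\phi_U\colon\C\to\C$ between $P$ and $\wt P$, which transports back via the hybrid conjugacies to a qc equivalence between $J(f|_U)$ and $J(\wt f|_{\wt U})$. Combinatorial position of these little Julia sets is respected by construction.

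Applying Theorem B.(2) with these data produces the desired qc conjugacy between $f$ and $\wt f$ on a neighborhood of the Julia set containing every Fatou component outside the basins of roots. For the non-existence of invariant line fields: Theorem B already reduces the question to the little Julia sets coming from renormalization, and each of these is qc equivalent to the Julia set of a real polynomial with all critical points real, which admits no invariant line field by the standard qc-deformation consequence of Theorem A. Transporting back via the hybrid conjugacy rules out invariant line fields on the little Julia sets, hence on the whole Julia set of $f$.

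The main obstacle is the second paragraph: the straightening must be carried out equivariantly so that $P$ is genuinely real and not merely complex conjugate to a real polynomial, and the combinatorial and kneading data of the ambient Newton maps must really descend to the hypotheses of Theorem A on $P$ and $\wt P$. In particular one has to check that parabolic periodic points of $f$ on $\partial U$ correspond under straightening to parabolic periodic points of $P$, and that the equality of kneading sequences at the renormalizable critical points of $f$ and $\wt f$ yields a genuine topological conjugacy between $P|_\R$ and $\wt P|_\R$ rather than a mere consistency condition.
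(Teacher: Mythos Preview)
Your proposal is correct and follows essentially the same route as the paper: pass to the roots-basin postcritically-finite representative, use the combinatorial equivalence to match up renormalization triples, straighten each to a real polynomial with all critical points real, feed the kneading and parabolic data into Theorem~A to obtain a qc conjugacy of the straightened polynomials, and then apply Theorem~B.(2). The paper is terser on the points you flag as obstacles (it simply asserts that the straightened polynomials are real and invokes Milnor--Thurston kneading theory for the topological conjugacy on $\R$), but the logic is the same.

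One small discrepancy worth noting: for the absence of invariant line fields the paper does \emph{not} deduce this from Theorem~A but instead cites directly a result of Clark--van~Strien--Trejo stating that a real polynomial with only real critical points carries no invariant line field on its Julia set. Your phrase ``standard qc-deformation consequence of Theorem~A'' is a bit loose here, since Theorem~A as stated only yields qc conjugacy from combinatorial data and does not by itself preclude a nontrivial qc deformation supported on the Julia set; you need the separate no-line-fields input.
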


For the definition of kneading sequences of real Newton maps, see Section \ref{sec:combinatorial}. Using this rigidity result, and following the idea in \cite{KSS1} (deducing the density of hyperbolicity from rigidity), we can prove the density of hyperbolicity of real Newton maps with all free critical points real.

\begin{theorem}[main]\label{thm:main}
Any real Newton map with all free critical points real can be approximated by hyperbolic real Newton maps of the same degree with all free critical points real.
\end{theorem}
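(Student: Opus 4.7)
The plan is to follow the now-standard Kozlovski--Shen--van Strien strategy \cite{KSS1} of deducing density of hyperbolicity from a rigidity theorem, here using Proposition \ref{pro:rigidity} as the input. Let $\FFF_d$ denote the real-analytic parameter space of real Newton maps of degree $d$ with all free critical points real (modulo real affine conjugacy), and let $\HHH_d\subset\FFF_d$ be its hyperbolic locus; the latter is open by structural stability of hyperbolic rational maps, so it suffices to prove that every $f\in\FFF_d$ lies in the closure of $\HHH_d$.

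I would first reduce to the case where $f$ has no neutral periodic cycle. Every periodic multiplier is a real-analytic function on $\FFF_d$ which is nonconstant, and any neutral multiplier in the real family is forced to be $\pm 1$, hence parabolic. Such parabolic cycles can be destroyed by an arbitrarily small real perturbation inside $\FFF_d$, so I may assume $f$ has only attracting and repelling cycles. Under this assumption, $f$ is non-hyperbolic exactly when some free real critical point $c$ fails to be attracted to an attracting cycle.

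The main step is to show that every neighborhood of $f$ in $\FFF_d$ contains a map whose combinatorics or kneading sequence differ from those of $f$. Suppose not; then every nearby $\wt f\in\FFF_d$ satisfies, together with $f$, the hypotheses of Proposition \ref{pro:rigidity}, so it is quasi-conformally conjugate to $f$ on a neighborhood of the Julia set, with the conjugacy extending to all non-root Fatou components. Because $f$ carries no invariant line field on its Julia set, the Beltrami differential defining this conjugacy can only be supported on the attracting basins of the roots and on the finitely many real renormalization pieces. A Teichm\"uller-style dimension count then shows that the quasi-conformal class of $f$ inside $\FFF_d$ is zero-dimensional: the basin moduli are already pinned down by the positions of the roots of the defining polynomial $p$, and the renormalization pieces, being real by hypothesis and rigid by Theorem A, contribute no new real moduli. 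This contradicts the assumption that a whole neighborhood of $f$ lies in the same quasi-conformal class.

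Finally, I would upgrade \emph{combinatorics changes under perturbation} to \emph{hyperbolic maps are arbitrarily close}. For each free critical point $c_i$, choose a real-analytic one-parameter arc $f_t\subset\FFF_d$ with $f_0=f$ along which the combinatorial position of $c_i$ in the dynamical partition varies non-trivially; such arcs exist by the main step. Along such an arc the parameters for which $c_i$ lies in an attracting Fatou component form an open set, while by the rigidity statement (at most one affine class per combinatorial-plus-kneading datum) the bad parameters form a nowhere dense set, so an open dense set of $t$ already places $c_i$ into a Fatou component. Iterating over the finitely many free critical points yields hyperbolic real Newton maps arbitrarily close to $f$. I expect the main obstacle to be the dimension count in the third paragraph: one must book-keep very carefully the quasi-conformal moduli contributed by the finitely many attracting basins of the roots of $p$ and by the polynomial-like pieces coming from renormalization, checking that the real-critical-point assumption together with Theorem A forbids any hidden real moduli. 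This is precisely the mechanism that transports rigidity into density of hyperbolicity in the KSS framework, here adapted to the real Newton family.
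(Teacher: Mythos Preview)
Your outline has the right KSS philosophy, but the last paragraph contains a genuine gap. From ``the combinatorics of $f$ change under perturbation'' you jump to ``along a one-parameter arc, the bad parameters form a nowhere dense set, because rigidity gives at most one affine class per combinatorial-plus-kneading datum.'' This is a non-sequitur: rigidity bounds the size of each individual class, but there are uncountably many classes, and nothing you have said prevents an entire interval of parameters from being non-hyperbolic while sweeping through continuum-many distinct combinatorial types. (The subsequent ``iterate over the finitely many $c_i$'' is also unjustified: capturing $c_1$ may lose $c_2$.) The paper closes this gap by a different mechanism. It works inside an $s$-dimensional real-analytic submanifold $W$ obtained by first deforming $f$ to be postcritically-finite in all its attracting basins and then freezing those critical relations; here $s$ counts the free critical points not yet in an attracting basin. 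The rigidity input (Proposition~\ref{pro:rigidity}) together with the McMullen--Sullivan formula for $\dim QC(g)$ is used not to say ``non-hyperbolic maps are nowhere dense'' but to force at least one critical-relation subvariety $M_{n,i,j}=\{g:g^n(c_{r+i}(g))=c_{r+j}(g)\}$ or $M_{n,i,\infty}$ to have codimension exactly one in $W$: otherwise their complement in $W$ would be arc-connected, hence lie in a single real QC class of dimension $\le s-1$, contradicting $\dim W=s$. One then restricts to that hypersurface and repeats, driving the dimension to zero; the endpoint is a postcritically-finite map $g$. The final step---producing a nearby map with one more critical point in an attracting basin, contradicting local maximality of $\tau$---is not combinatorial at all but quotes Aspenberg's perturbation theorem for Misiurewicz maps \cite{AR,AR2}. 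Your sketch has no analogue of either the inductive codimension-one descent or the Aspenberg endgame.

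Two smaller points. First, your opening reduction ``perturb away the parabolic cycles'' is not justified: nonconstancy of a multiplier does not by itself let you destroy a parabolic while staying in the real-free-critical-point locus and without creating new obstructions. The paper avoids this entirely by introducing the lower-semicontinuous counter $\tau(f)$ (number of critical points in attracting basins), passing to the open dense set where $\tau$ is locally maximal, and handling parabolics inside the McMullen--Sullivan dimension count rather than up front. Second, your dimension count in paragraph three is too loose: the phrase ``the basin moduli are already pinned down by the positions of the roots'' confuses parameters with moduli, and you have not accounted for moduli coming from non-root attracting cycles. The paper kills these by the preliminary surgery making $f$ postcritically-finite in its attracting basins before invoking \cite{McS}.
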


\noindent {\bf The paper is organized as follows}: In Section \ref{sec:newton-graph}, we recall the definition of Newton graph given in \cite{MRS}; in Section \ref{sec:combinatorial}, we prove Proposition \ref{pro:rigidity}, and in Section \ref{sec:density}, we prove Theorem \ref{thm:main}.

{\noindent \bf Acknowledgement.} We thank Weixiao Shen to point out a useful reference. The author is supported by NSFC grant no. 11871354.

\section{Newton graphs}\label{sec:newton-graph}

\subsection{Basic results of Newton maps}\label{sec:basic}
In general, let $p(z)$ be a complex polynomial, factored as
\[p(z)=(z-a_1)^{n_1}\cdots(z-a_d)^{n_d},\]
 and $a_1,\ldots,a_d$ ($d\geq3$) are distinct roots of $p$, with multiplicities
$n_1,\ldots,n_d\geq1$, respectively.
Its Newton map $$f=f_p(z):=z-\frac{p(z)}{p'(z)}$$ has degree $d$ and fixes each root $a_i$ with multiplier $f'(a_i)=(n_i-1)/n_i$. Therefore, each root $a_i$ of $p$ corresponds to an attracting fixed point of $f$
with multiplier $1-1/n_i$. One may verify that $\infty$ is a repelling fixed point of $f$ with multiplier $d/(d-1)$. This discussion shows that a degree $d$ Newton map has $d+1$
distinct fixed points with specific multipliers. On the other hand, a well-known theorem of Head states that the fixed points together with the specific
multipliers can determine a unique Newton map:

\begin{proposition}[Head]\label{pro:head}
A rational map $f$ of degree $d\geq3$ is a Newton
map  if and only if f has $d+1$ distinct fixed points $a_1,\ldots,a_d,\infty$,  such that for each fixed point $a_i$, the multiplier takes the form $1-1/n_i$ with $n_i\in\N, 1\leq i\leq d$.
\end{proposition}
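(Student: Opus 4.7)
The forward implication is contained in the paragraph immediately preceding the statement (the multipliers $1-1/n_i$ at the finite roots are computed directly from the definition of $f_p$), so the real task is the converse. Given a rational $f$ of degree $d\geq 3$ with the $d+1$ distinct fixed points $a_1,\dots,a_d,\infty$ and the specified multipliers, my plan is to reconstruct the polynomial
\[
p(z):=\prod_{i=1}^d(z-a_i)^{n_i}
\]
and to prove $f=f_p$ by matching $z-f(z)$ with $p(z)/p'(z)$. Equivalently, I will identify the rational function $\varphi(z):=1/(z-f(z))$ with the logarithmic derivative $p'(z)/p(z)=\sum_{i=1}^d n_i/(z-a_i)$ via a partial fraction decomposition.

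I would proceed in three steps. First, each $a_i$ is a simple fixed point (since its multiplier $1-1/n_i\neq 1$), so $z-f(z)$ has a simple zero at $a_i$, and $\varphi$ has a simple pole at $a_i$ with residue $1/(1-f'(a_i))=n_i$. Second, I apply the Holomorphic Lefschetz Fixed Point Formula $\sum_{f(z)=z}1/(1-f'(z))=1$ to the $d+1$ distinct fixed points of $f$ to determine the multiplier at $\infty$: setting $N:=\sum_{i=1}^d n_i$, it must equal $N/(N-1)$. Expanding $f$ in the local chart at $\infty$ then gives $f(z)=\frac{N-1}{N}z+O(1)$, whence $z-f(z)\sim z/N$ and $\varphi(z)\sim N/z$ as $z\to\infty$, with no polynomial part.

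Since $f$ has exactly $d+1$ fixed points on $\mathbb{P}^1$ and $\infty$ accounts for one, $z-f(z)$ has exactly the $d$ finite simple zeros $a_1,\dots,a_d$, so $\varphi$ has no other poles. Combining this with the residue data and the asymptotic at $\infty$, partial fractions force
\[
\varphi(z)=\sum_{i=1}^d\frac{n_i}{z-a_i}=\frac{p'(z)}{p(z)}.
\]
Inverting gives $z-f(z)=p(z)/p'(z)$, i.e., $f=f_p$; since $p$ has $d\geq 3$ distinct roots, the resulting $f_p$ has degree $d$, matching $\deg f$. The key ingredient is the Lefschetz formula, which converts the purely local multiplier hypothesis into the global residue datum $\varphi\sim N/z$ needed to close the partial fraction identity; once that is in hand, the rest is a direct computation, so I do not expect any serious obstacle.
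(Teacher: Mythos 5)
Your proof is correct. The paper offers no proof of this statement --- it is quoted as a known theorem of Head --- but your argument is the standard one: the rational (holomorphic Lefschetz) fixed point formula forces the multiplier $N/(N-1)$ at $\infty$, hence $1/(z-f(z))$ is a rational function vanishing at $\infty$ whose only poles are simple poles at the $a_i$ with residues $n_i$, so it equals $p'/p$ for $p=\prod_i(z-a_i)^{n_i}$. The only implicit step worth flagging is that all $d+1$ fixed points are simple (so the index at $\infty$ really is $1/(1-\lambda_\infty)$ with $\lambda_\infty\neq 1$); this follows because a degree-$d$ rational map has exactly $d+1$ fixed points counted with multiplicity and you are given $d+1$ distinct ones.
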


According to Shishikura \cite{Sh}, the Julia set of a Newton map is always
connected, or equivalently, all Fatou components are simply connected.

\subsection{Newton graphs}
Let $p(z)$ be a polynomial with $d$ distinct roots
 $a_1,\ldots,a_d$, and $f:=f_p$ be its Newton map. Denote $B_1,\ldots, B_d$ the immediate basins of the attracting fixed points $a_1,\ldots,a_d$ of $f$. We call the union of $B_1,\ldots,B_d$ the \emph{immediate roots-basin} of $f$, and
 \begin{equation}\label{eq:root-basin}
 \BBB_f:=\bigcup_{n\geq0} f^{-n}(B_1\cup\cdots\cup B_d)
 \end{equation}
 the \emph{roots-basin} of $f$.
 The map $f$ is called \emph{roots-basin \pf} if every critical point in $\BBB_f$ is eventually mapped to one of $a_1,\ldots,a_d$.

Let $f$ be a Newton map of degree $d$. By a standard quasi-conformal surgery on $\BBB_f$, we can obtain a unique Newton maps $f_*$ (up to affine conjugate), such that $f_*$ is roots-basin postcritically-finite, and $f,f_*$ are quasi-conformally conjugate on neighborhoods of their Julia sets together with the Fatou components not in their roots-basins
 (See \cite[Section 4]{DS} for a more detailed discussion). We call $f_*$ the \emph{roots-basin \pf represent} of $f$.

In general, let $R$ be a rational map and $\UUU$ the union of the grand orbits of several Fatou components of $R$. We say $R$ is \emph{\pf in $\UUU$} if each component of $\UUU$ contains at most one postcritical point. One can give a natural dynamical parameterization of all components of $\UUU$ (see \cite{Mil}).
\begin{lemma}\label{lem:coordinate}
There exist, so-called B\"{o}ttcher coordinates, $\{(U,\phi_U)\}_{U\in{\rm Comp}(\UUU)}$, such that
\begin{enumerate}
\item each $\phi_U:U\to\D$ is conformal;
\item $\phi_{R(U)}\circ R\circ \phi_U^{-1}(z)=z^{d_U}, z\in\D$, where $d_U:={\rm deg}(R|_U)$.
\end{enumerate}
\end{lemma}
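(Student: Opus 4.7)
The plan is to build the family $\{\phi_U\}$ in two stages: first construct them on a chosen periodic cycle in $\UUU$, then extend to all preperiodic components of $\UUU$ by successive pullback under $R$. Throughout, I use that each $U\in{\rm Comp}(\UUU)$ is a simply connected Fatou component and that the PCF-in-$\UUU$ hypothesis concentrates the ramification of $R|_U:U\to R(U)$ at a single fibre.

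Start with a periodic cycle $U_0\to\cdots\to U_{p-1}\to U_0$ of $R$. The return map $R^p|_{U_0}$ is a proper branched self-cover of $U_0$ of degree $d:=d_{U_0}\cdots d_{U_{p-1}}$, and in the setting of the lemma the cycle is superattracting at a critical fixed point $c_0\in U_0$ which, by the PCF hypothesis, is the unique critical point of $R^p|_{U_0}$ (two distinct critical points along the cycle would yield two distinct postcritical points in some $U_i$). B\"ottcher's theorem at $c_0$ then furnishes a conformal $\phi_{U_0}:U_0\to\D$ conjugating $R^p|_{U_0}$ with $z\mapsto z^d$, and the extension from a germ at $c_0$ to all of $U_0$ is standard, using that $c_0$ carries all the ramification of $R^p|_{U_0}$.

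Propagate $\phi_{U_0}$ around the cycle by the following basic step, which I then iterate outward to cover all preperiodic components. Given a conformal $\phi_{R(U)}:R(U)\to\D$ sending the single postcritical point of $R(U)$ (if any) to $0$, let $\phi_U$ be a holomorphic branch on the simply connected $U$ of $\bigl(\phi_{R(U)}\circ R\bigr)^{1/d_U}$; such a branch is a well-defined conformal isomorphism $U\to\D$, because $\phi_{R(U)}\circ R:U\to\D$ is a proper branched cover of degree $d_U$ whose only critical value is $0$. Applying this step $p$ times around the cycle returns to $U_0$, and the construction closes up consistently because $\phi_{U_0}^{d}=\phi_{U_0}\circ R^p$ is precisely B\"ottcher's identity. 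Iterating the step along successive preimages of the cycle under $R$ then defines $\phi_U$ on every remaining component of $\UUU$, and the functional equation $\phi_{R(U)}\circ R\circ \phi_U^{-1}(z)=z^{d_U}$ holds by construction.

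The main technical point is the first stage: identifying the unique critical fixed point on each periodic cycle in $\UUU$ and extending the B\"ottcher germ globally over $U_0$. Once that is in place, the rest reduces to the elementary fact that on a simply connected domain one can extract a single-valued $d_U$-th root of a proper branched cover onto $\D$ whose only critical value is $0$, so the outward induction runs without obstruction.
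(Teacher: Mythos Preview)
The paper does not supply its own proof of this lemma; it is stated with a reference to \cite{Mil}. Your two-stage construction---B\"ottcher on a periodic component, then inductive pullback to preperiodic components---is the standard route and is correct in outline.

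There is one genuine gap: your justification that the periodic cycle is superattracting with $c_0$ the unique critical point of $R^p|_{U_0}$. The parenthetical ``two distinct critical points along the cycle would yield two distinct postcritical points in some $U_i$'' is not valid as stated: $R$ may well be critical at several of the periodic points $a_i$ without creating more than one postcritical point per component, since all such forward orbits lie on the cycle $\{a_i\}$ itself. Nor does this argument explain why the fixed point is critical rather than merely attracting. The correct argument is local to each step $R|_{U_j}:U_j\to U_{j+1}$. Every critical value of $R|_{U_j}$ is postcritical in $U_{j+1}$, hence equals the unique postcritical point $a_{j+1}$ there; and a proper holomorphic map between disks with a single critical value has, by Riemann--Hurwitz, a single critical point which is moreover the \emph{only} preimage of that value. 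Since $R(a_j)=a_{j+1}$ as well, this forces $a_j$ to coincide with that critical point whenever $d_{U_j}\ge 2$. Iterating, $a_0$ is the unique preimage of itself under $R^p|_{U_0}$ and hence its unique critical point; in particular the cycle is superattracting and the B\"ottcher germ extends to all of $U_0$. With this correction in place, the closing-up around the cycle and the pullback to preperiodic components go through exactly as you wrote.
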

For each $U\in{\rm Comp}(\UUU)$, the B\"{o}ttcher map $\phi_U$ is generally not unique. Once the B\"{o}ttcher coordinates $\{(U,\phi_U)\}_{U\in{\rm Comp}(\UUU)}$ are chosen, for any component $U$ of $\UUU$, we call the point $\phi_U^{-1}(0)$ the \emph{center} of $U$, and the preimages by $\phi_U$ of radii of $\D$ the \emph{internal rays (of $R$) in $U$}. By Lemma \ref{lem:coordinate}, the map $R$ sends an internal ray of $U$ homeomorphically onto an internal ray of $R(U)$. According to Douady-Hubbard's theory, if an internal ray in $U$ is eventually periodic by $R$, it \emph{lands} at $\partial U$, i.e.,  it accumulates at exact one point in $\partial U$.

The study of dynamics of Newton maps relies on an invariant graph constructed in \cite{MRS}. Let $f$ be a roots-basin \pf Newton map of degree $d$, with immediate roots-basin $B_1,\ldots,B_d$. The fixed internal rays in each $B_i$ land at fixed points in $\partial B_i$, which can only possible be $\infty$. Thus all fixed internal rays have a common landing point. We denote $\De_0$ the union of all fixed internal rays in $B_1,\ldots,B_d$ together with $\infty$.
Clear $f(\De_0)=\De_0$. For any $n\geq0$, denote by $\De_n$ the connected component of $f^{-n}(\De_0)$ that contains $\infty$. Following \cite{MRS}, we call $\De_n$  the \emph{Newton graph} of $f$ at level $n$. The vertex set $V_{\De_n}$ of $\De_n$ consists of iterated preimages of fixed points of $f$ contained in $\De_n$.

\begin{lemma}[\cite{MRS},Theorem 3.4]\label{lem:connected}
There exists $N\geq0$ such that the Newton graph  $\De_N$ contains all poles of $f$. Then $\De_{n+1}=f^{-1}(\De_n)$ and $\De_n\subseteq\De_{n+1}$ for any $n\geq N$.
\end{lemma}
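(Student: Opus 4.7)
The plan proceeds in three steps: monotonicity $\De_n\subseteq\De_{n+1}$; a degree count showing that once $\De_n$ contains every pole the preimage $f^{-1}(\De_n)$ is automatically connected; and an induction on depth inside $\BBB_f$ producing the level $N$.

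\textbf{Monotonicity.} I argue by induction on $n$. For the base case, $f(\De_0)=\De_0$ gives $\De_0\subseteq f^{-1}(\De_0)$, and connectedness of $\De_0$ together with $\infty\in\De_0$ places $\De_0$ in the distinguished component $\De_1$. Inductively, $\De_{n-1}\subseteq\De_n$ yields $f(\De_n)\subseteq\De_{n-1}\subseteq\De_n$, hence $\De_n\subseteq f^{-1}(\De_n)$, and the same reasoning gives $\De_n\subseteq\De_{n+1}$.

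\textbf{Stabilization.} Suppose $f^{-1}(\infty)\subseteq\De_n$, so $\De_n$ contains $\infty$ and every pole. By monotonicity $\De_{n+1}\supseteq\De_n$ also contains $f^{-1}(\infty)$, and the restriction $f:\De_{n+1}\to\De_n$ is a branched covering of some degree $d'$. Counting preimages of $\infty$ with local multiplicity inside $\De_{n+1}$ yields $d'\geq 1+\sum_p m_p=d$, where $m_p$ is the pole order and the sum runs over poles $p$; since $\deg f=d$, this forces $d'=d$ and so $\De_{n+1}=f^{-1}(\De_n)$. Since monotonicity propagates pole-capture to every higher level, the equality persists for all $n\geq N$ once it holds at level $N$.

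\textbf{Producing $N$.} For each pole $p$ of $f$, let $k_p$ be the least integer such that some component $U\in{\rm Comp}(\BBB_f)$ with $p\in\partial U$ satisfies $f^{k_p}(U)\in\{B_1,\ldots,B_d\}$; this is finite because $\BBB_f$ is the grand orbit of $B_1\cup\cdots\cup B_d$. By induction on $k$ I show that every pole with $k_p\leq k$ lies in $\De_{k+1}$. The base case $k=0$ uses Lemma \ref{lem:coordinate} on $f|_{B_i}$: the $\deg(f|_{B_i})$ preimages of $\gamma_i$ emanate from $a_i$ and land at $\infty$ together with the poles on $\partial B_i$, so every such pole sits in the component of $f^{-1}(\De_0)$ containing $\infty$, namely $\De_1$. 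For the inductive step, let $p$ be a pole with $k_p=k+1$ and $U$ a depth-$(k+1)$ component with $p\in\partial U$; by induction the system of preimage rays inside $f(U)$ lies in $\De_{k+1}$, and pulling back under $f|_U$ produces rays in $U$ landing at preimages of $\infty$ on $\partial U$ (including $p$), all meeting at the B\"ottcher center $c_U$ and attached back to the $\infty$-component $\De_{k+2}$ of $f^{-1}(\De_{k+1})$ through the shared-boundary structure between $U$ and the depth-$k$ component $f(U)$. Since there are at most $d-1$ poles, $N=1+\max_p k_p$ is finite and $\De_N\supseteq f^{-1}(\infty)$.

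\textbf{Main obstacle.} The delicate point is the pole-capture induction: at each depth one must check that the preimage-ray system in the newly absorbed component is actually linked to $\De_{k+1}$ rather than forming an isolated piece of $f^{-1}(\De_{k+1})$. This uses the roots-basin postcritically finite hypothesis essentially, as it ensures via Lemma \ref{lem:coordinate} that every preperiodic internal ray in $\BBB_f$ lands and that the B\"ottcher coordinates fit together consistently across the forward-orbit tree of basin components.
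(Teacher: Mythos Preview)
The paper does not give its own proof of this lemma; it is quoted directly from \cite{MRS} (Theorem~3.4 there), so there is nothing in the present paper to compare against. I therefore evaluate your argument on its own.

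Your monotonicity step is correct. Your stabilization step is also essentially right, though the phrase ``branched covering of some degree $d'$'' for a map between graphs is informal; the cleanest way to run it is to note that any component $C\neq\De_{n+1}$ of $f^{-1}(\De_n)$ would satisfy $f(C)=\De_n\ni\infty$ by path-lifting, forcing $C$ to contain a pole --- impossible once $f^{-1}(\infty)\subseteq\De_n\subseteq\De_{n+1}$.

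The genuine gap is in the pole-capture induction. In your inductive step you take a depth-$(k{+}1)$ component $U$ with $p\in\partial U$, observe that the ray-star in $U$ lies in $f^{-1}(\De_{k+1})$, and then assert it is ``attached back to the $\infty$-component $\De_{k+2}$ \ldots\ through the shared-boundary structure between $U$ and the depth-$k$ component $f(U)$.'' But $U$ and $f(U)$ are related as domain and image of $f|_U$; they need not be adjacent in $\widehat{\C}$ and need not share any boundary point at all. Knowing the rays in $f(U)$ lie in $\De_{k+1}$ tells you only that their $f$-preimages lie somewhere in $f^{-1}(\De_{k+1})$, not that they lie in its $\infty$-component. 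You flag exactly this difficulty in your ``Main obstacle'' paragraph and then invoke the roots-basin postcritically-finite hypothesis and Lemma~\ref{lem:coordinate}, but those only guarantee that the relevant internal rays \emph{land}; landing does not by itself link the star in $U$ to $\infty$. (There is also a mismatch between your stated induction hypothesis, which concerns poles, and what you actually use, which concerns rays inside depth-$k$ components.) The argument in \cite{MRS} resolves this connectivity issue by a different, more global route, and your depth-induction as written does not substitute for it.
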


\begin{definition}[canonical Newton graph]\label{def:newton-graph}
Let $N$ be the minimal number such that $\De_N$ contains all poles of $f$ and all critical points iterated to fixed points of $f$. We call $\De_f:=\De_N$ the \emph{canonical Newton graph} of $f$.
\end{definition}




\subsection{Renormalization of Newton maps} \label{sec:puzzle-renormalization}
According to \cite{DH2}, A \emph{polynomial-like} map of degree $d\geq2$ is a triple $(g,U,V)$
where $U,V$ are topological disks in $\C$ with $\ov{U}\subseteq V$,
and $g:U\to V$ is a holomorphic proper map of degree $d$. The \emph{filled-Julia
set} of $g$ is the set of points in $U$ that never leave $V$ under iteration of $g$, i.e.,
$$K_g:=\bigcap_{n\geq0}g^{-n}(V),$$
and its \emph{Julia set} is defined as $J_g:=\partial K_g$.

Two polynomial-like maps $f$ and $g$ are \emph{hybrid equivalent} if there is a quasiconformal conjugacy $\psi$ between $f$ and $g$ that is defined on a neighborhood
of their respective filled-in Julia sets so that $\partial \ov{\psi}=0$ on $K_f$.
The crucial relation between polynomial-like maps and polynomials is
explained in the following theorem, due to Douady and Hubbard \cite{DH2}.

\begin{theorem}[Straightening Theorem]\label{thm:straigtening}
Let $f:U\to V$ be a polynomial-like map of degree $d\geq2$. Then $f$ is hybrid equivalent to a polynomial $P$ of the same degree. Moreover, if $K_f$ is connected, then $P$ is unique up to affine conjugation.
\end{theorem}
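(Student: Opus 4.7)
The strategy is the standard quasiconformal surgery argument of Douady--Hubbard. Set $A := V \setminus \overline{U}$, an annulus on which $f$ has no critical values (since $f:U\to V$ is proper of degree $d$ with all critical points inside $K_f\subset U$). The plan is to graft the dynamics of $f$ on $U$ onto the model dynamics $z\mapsto z^d$ near $\infty$ through $A$, then use the Measurable Riemann Mapping Theorem to realise the result as a genuine polynomial.

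First I would fix a quasiconformal homeomorphism $\phi:A\to\{r<|z|<r^d\}$ for some $r>1$, chosen so that on the inner boundary $\phi$ conjugates $f|_{\partial U}:\partial U\to\partial V$ to $z\mapsto z^d$ on the circle $|z|=r$, and on the outer boundary $\phi$ maps $\partial V$ to $|z|=r^d$ in an arbitrary quasisymmetric way. Such a $\phi$ exists because both boundary maps are orientation-preserving $d$-fold coverings of topological circles; one simply interpolates between the two prescribed boundary homeomorphisms through a quasiconformal map of the annulus (using, e.g., a Beurling--Ahlfors type extension, or by observing that both sides are quasiconformally equivalent to standard model annuli). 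Now define a map $F:\C\to\C$ by
\[
F(z)=\begin{cases} f(z), & z\in U,\\ \phi^{-1}\bigl((\phi(z))^d\bigr), & z\in A,\\ \phi^{-1}\bigl(z^d\bigr) \text{ (extended by }z\mapsto z^d\text{ via }\phi\text{ on }\C\smm V\text{)}, & z\in\C\smm V.\end{cases}
\]
More precisely, extend $\phi$ to a quasiconformal map of $\C\smm U$ onto $\C\smm\{|z|\le r\}$ and set $F := \phi^{-1}\circ(z\mapsto z^d)\circ\phi$ outside $U$. By construction $F$ matches $f$ on $\partial U$, so $F$ is a quasiregular self-map of $\C$ of degree $d$.

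Next I would build an $F$-invariant Beltrami coefficient $\mu$. On $\C\smm U$ take $\mu := \phi^*\mu_0$ where $\mu_0$ is the standard complex structure, so $\mu$ is invariant under $F$ there (since $F$ is conjugate to $z\mapsto z^d$, which is holomorphic). On $U$, define $\mu$ by pulling back under the forward iterates of $F$: for $z\in f^{-n}(\C\smm U)\cap U$ set $\mu(z) := (F^n)^*\mu(F^n z)$, and set $\mu\equiv 0$ on $K_f$. Because $f$ is holomorphic on $U$, pulling back does not increase $\|\mu\|_\infty$, so $\|\mu\|_\infty<1$. By the Measurable Riemann Mapping Theorem there is a quasiconformal homeomorphism $\psi:\C\to\C$ with $\psi^*\mu_0=\mu$, normalised so that $\psi$ fixes $\infty$. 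Then $P := \psi\circ F\circ \psi^{-1}$ is a quasiregular self-map of $\C$ of degree $d$ whose Beltrami coefficient vanishes, hence is holomorphic by Weyl's lemma, hence a polynomial of degree $d$. By construction $\psi$ conjugates $f$ to $P$ on a neighbourhood of $K_f$ and $\overline{\partial}\psi=0$ on $K_f$, so $\psi$ is a hybrid equivalence.

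Finally I would address uniqueness when $K_f$ is connected. Suppose $\psi_1,\psi_2$ are hybrid equivalences of $f$ with polynomials $P_1,P_2$. Then $h := \psi_2\circ\psi_1^{-1}$ is a quasiconformal homeomorphism of $\C$ (after extending to the complement of $K_{P_i}$ using the dynamics) conjugating $P_1$ to $P_2$, with $\overline{\partial}h=0$ on $K_{P_1}$. Off $K_{P_1}$, the basin of $\infty$ of each $P_i$ carries a unique B\"ottcher coordinate near $\infty$ (up to an $(d-1)$-th root of unity), and the conjugacy must intertwine these coordinates, which forces $h$ to be holomorphic on $\C\smm K_{P_1}$. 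Combined with $\overline\partial h=0$ on $K_{P_1}$ and Weyl's lemma, $h$ is entire, and being quasiconformal at $\infty$ it is affine. The main obstacle in this argument is the quasiconformal interpolation on the annulus $A$ and the verification that the Beltrami coefficient built on $U$ remains bounded after infinitely many pull-backs; this is handled by the holomorphicity of $f$ inside $U$, which preserves the dilatation bound inherited from the finite construction on $\C\smm U$.
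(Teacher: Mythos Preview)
The paper does not give its own proof of this statement: the Straightening Theorem is quoted as a result ``due to Douady and Hubbard \cite{DH2}'' and is used as a black box. Your plan is precisely the standard Douady--Hubbard quasiconformal surgery argument from that reference, and as an outline it is correct.

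One small slip worth fixing: in your first paragraph you justify that $A=V\setminus\overline U$ contains no critical values by asserting that ``all critical points [are] inside $K_f\subset U$''. The critical points of a polynomial-like map lie in $U$ by properness, but they need not lie in $K_f$ (they may escape), and their images may well land in $A$. This does not actually matter for the construction---on $A$ you are defining $F$ through the model $z\mapsto z^d$ via $\phi$, not through $f$, and the gluing along $\partial U$ only uses that $f:\partial U\to\partial V$ is a $d$-fold covering---so simply delete the parenthetical. In the uniqueness step, your extension of $h$ to $\mathbb C\setminus K_{P_1}$ via B\"ottcher coordinates is the right idea; to make it airtight you should invoke the removability/gluing lemma for quasiconformal maps (Rickman's lemma) to conclude that the piecewise-defined $h$ is globally quasiconformal across $\partial K_{P_1}$ before applying Weyl's lemma.
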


Now, let $f$ be a roots-basin \pf Newton map, and $W,W'$ be two complementary components of Newton graphs $\De,\De'$ respectively.
We call $\rho=(f^p,W,W')$ a \emph{renormalization  triple (of period $p$)} if:
\begin{enumerate}
\item $f^i(W),0\leq i\leq p-1,$ are pairwise disjoint (which are complementary components of Newton graphs $f^i(\De),0\leq i\leq p-1$ respectively), $f^p(W)=W'\supseteq W$, and ${\rm deg}(f^p|_W)\geq 2$;
\item the \emph{filled-in Julia set} of $\rho$, defined as
\[K_\rho:=\{z\in W\mid f^n(z)\in W\text{ for all $n\geq0$}\}\]
is connected.
\end{enumerate}

A roots-basin \pf Newton map is called \emph{renormalizable} if such defined renormalization triples exist; and \emph{no-renormalizable} otherwise. In the renormalizable case,
we call  $K_\rho$ and $J_\rho:=\partial K_\rho$ a \emph{little filled-in Julia set} and \emph{little Julia set} of $f$ (coming from renormalization). A general Newton map $f$ is called \emph{renormalizable/no-renormalizable} if its roots-basin \pf represent $f_*$ is renormalizable/no-renormalizable; and in the renormalizable case, the \emph{little (filled-in) Julia sets} of $f$ is the image of the little (filled-in) Julia sets of $f_*$ by the quasi-conformal conjugacy between $f$ and $f_*$ on $\ov{\C}\setminus \BBB_f$ and $\ov{\C}\setminus \BBB_{f_*}$ respectively.

Note that $f^p:W\to W'$ is not necessarily a polynomial-like map , because it is possible that either $\ov{W}\not\subseteq W'$ (although $W\subseteq W'$) or $W,W'$ are not disks (although they are simply-connected). However, the following result shows that one can revise $f^p:W\to W'$ to a polynomial-like map.


\begin{proposition}[\cite{LMS1}, Lemma 4.19]\label{pro:puzzle-renormalization}
Let $(f^p,W,W')$ be a renormalization triple defined above. Then there exists a pair of Jordan domain $U\subseteq V$, such that $\ov{V}\subseteq W$, $f:U\to V$ is a polynomial-like map with  degree equal to ${\rm deg}(f^p|_W)$, and the filled-in Julia set of $f^p:U\to V$ equals to that of $(f^p,W,W')$.
\end{proposition}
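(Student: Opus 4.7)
The obstacle to $(f^p,W,W')$ being polynomial-like is twofold: the complementary components $W,W'$ of Newton graphs need not be Jordan disks, and $\partial W$ may meet $\partial W'$ at common vertices (such as poles, pre-poles, or $\infty$). My strategy is to replace $W'$ by a Jordan domain $V$ satisfying $\overline W\subseteq V$ and $\overline V\subseteq W'$, and then to pull $V$ back inside $W$ to produce $U$. (I am reading the inclusion ``$\overline V\subseteq W$'' in the statement as $\overline V\subseteq W'$, which is the natural condition for $f^p$ to land in $V$.)

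To build $V$, I exploit the fact that each edge of the Newton graph is an arc of an internal ray lying in a single Fatou component of $\BBB_f$, and hence is real-analytic in the associated B\"{o}ttcher coordinate of Lemma~\ref{lem:coordinate}. For each edge $e$ of $\Delta'$ appearing on $\partial W'$, I push $e$ slightly into the $W'$-side of the Fatou component $C_e$ containing it, obtaining a nearby analytic arc $\widetilde e\subseteq C_e\cap W'$ with the same endpoints as $e$. Linked at the vertices of $\Delta'$ on $\partial W'$, the perturbed arcs assemble into a Jordan curve $\gamma\subseteq\overline{W'}$. With the perturbation chosen small enough, and $\gamma$ arranged to avoid the finitely many critical values of $f^p|_W$, the Jordan domain $V$ bounded by $\gamma$ satisfies $\overline W\subseteq V$ and $\overline V\subseteq W'$.

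Next let $U$ be the connected component of $(f^p)^{-1}(V)$ that meets the filled-in Julia set $K_\rho$. Since $f^p|_W\colon W\to W'$ is a proper branched covering of degree $d_0:=\deg(f^p|_W)\geq 2$ and $U$ meets $W$ at $K_\rho\subseteq W$, connectedness forces $U\subseteq W$; properness then gives that $f^p\colon U\to V$ is proper of degree $d_0$. Because $V\supseteq\overline W$ we have $\overline U\subseteq\overline W\subseteq V$, so $(f^p;U,V)$ is a polynomial-like triple of degree $d_0$. Since $\gamma$ avoids critical values, $\partial U$ is a single Jordan curve lifting $\gamma$, so $U$ is a Jordan domain. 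Finally, the filled-in Julia set of $f^p\colon U\to V$ is $\bigcap_{n\geq 0}(f^p)^{-n}(V)$, and a direct check using the chain of inclusions $U\subseteq W\subseteq V\subseteq W'$ shows it coincides with $K_\rho$.

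The main obstacle is the geometric step of constructing $\gamma$: at each vertex of $\Delta'$ on $\partial W'$, several edges coming from (possibly different) Fatou components of $\BBB_f$ meet, and the perturbations must be chosen consistently so that the union of pushed arcs forms a single closed Jordan curve without self-crossings. This is a combinatorial verification using the cyclic order of edges at each vertex, the local analytic structure of the Newton graph at poles, pre-poles, and at $\infty$, and the finiteness of the set of critical values of $f^p|_W$. Once this bookkeeping is in place, the remaining steps (pulling back, checking properness, identifying filled-in Julia sets) are formal.
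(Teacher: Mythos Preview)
The paper does not give its own proof of this proposition; it is quoted from \cite{LMS1}, so there is no in-paper argument to compare against. Evaluating your proposal on its own merits, there is a genuine gap in the construction of $V$.

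Your curve $\gamma$ is obtained by pushing each edge $e\subseteq\partial W'$ slightly into $W'$ while \emph{keeping its endpoints fixed}; thus $\gamma$ still passes through every vertex of $\Delta'$ lying on $\partial W'$. But the paper explicitly notes that one may have $\overline W\not\subseteq W'$, which is equivalent to $\partial W\cap\partial W'\neq\emptyset$: the common points are precisely such vertices (for instance $\infty$, poles, or pre-poles). Any shared vertex $v$ then lies on $\gamma=\partial V$, so $v\notin V$ and $v\notin W'$, while $v\in\overline W$; hence neither $\overline W\subseteq V$ nor $\overline V\subseteq W'$ holds. The inclusion $\overline U\subseteq\overline W\subseteq V$ that you use to verify polynomial-likeness therefore fails exactly in the situation the proposition is meant to address.

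The standard remedy, which is what \cite{LMS1} carries out, is a \emph{thickening} rather than a shrinking: at each repelling (pre)periodic vertex $v\in\partial W'$ one works in a linearizing coordinate and replaces the landing edges by arcs that detour around $v$ on the outside, producing Jordan domains $\widehat{W'}\supseteq\overline{W'}$ and (by pullback) $\widehat W\supseteq\overline W$; the repelling dynamics at $v$ is what forces $\overline{\widehat W}\subseteq\widehat{W'}$, yielding a genuine polynomial-like map $f^p:\widehat W\to\widehat{W'}$ of the correct degree. The stronger conclusion $\overline V\subseteq W$ stated in the proposition is then obtained by pulling back once more (or iterating), since the nested preimages of $\widehat{W'}$ shrink down to $K_\rho\Subset W$.
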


Suppose that $\rho=(f^p,W,W')$ is a renormalization triple of $f$, and let $f^p:U\to V$ be the polynomial-like map in Proposition \ref{pro:puzzle-renormalization}. By Straightening Theorem, there exists a polynomial  (unique up to affine conjugacy) hybrid equivalent to $f^p:U\to V$. We also say that this polynomial is \emph{hybrid equivalent to $\rho$}.
Notice that, by definition, the filled-in Julia set of any renormalization  triple is disjoint from the Newton graphs of all levels. So the filled-in Julia sets of different renormalization triples either coincides or are disjoint.

\section{Combinatorial rigidity of real Newton maps}\label{sec:combinatorial}

The following definition comes from \cite{DS}.
\begin{definition}[combinatorial equivalent]\label{def:equivalent}
 Two roots-basin \pf Newton maps $f,\wt{f}$ of the same degree are called \emph{combinatorial equivalent} if
\begin{enumerate}
\item their canonical Newon graphs (see Definition \ref{def:newton-graph}) $\De,\wt{\De}$ have the same level, and they are homeomorphic and topologically conjugate, respecting vertices;
\item there is a bijection between critical points of $f$ on $\ov{\C}\setminus \De$ and of $\wt{f}$ on $\ov{\C}\setminus \wt{\De}$
that respects degrees and itineraries with respect to (complementary components
of) $\De,\wt{\De}$.
\end{enumerate}
 Two Newton maps are \emph{combinatorially equivalent} if their corresponding roots-basin \pf represents are combinatorially equivalent.
 \end{definition}

 Now we define the \emph{kneading sequence} for a real Newton map.  Let $f$ be a real Newton map, and $c_1<\cdots<c_k$ are all free critical points of $f$ (critical points not in the roots-basin) in $\R$. If no such critical points exist, the kneading sequence is defined as $\emptyset$. Otherwise, these  points divides $\R$ into $k+1$ open intervals $L_1,\ldots,L_{k+1}$ labeled in the positive order. For each $c_i$, we define the \emph{kneading sequence of $c_i$} by $\ell(c_i)=\epsilon_0\epsilon_1\cdots\epsilon_n\cdots$ such that $\epsilon_n:=j$ if $f^n(c_i)\in L_j$ and $\epsilon_n:=j_*$ if $f^n(c_i)=c_j$. We call $(\ell(c_1),\ldots,\ell(c_k))$ the \emph{kneading sequence} of $f$.

\begin{proof}[Proof of Proposition \ref{pro:rigidity}]
Note that the quasi-conformal surgery on the roots-basins do not change the kneading sequences of real Newton maps. We can then assume $f$ and $\wt{f}$ are both roots-basin \pf. By the combinatorial equivalence of $f$ and $\wt{f}$, for each large $n$, there exists a homeomorphism between their Newton graph  $\De_n$ and $\wt{\De}_n$ of level $n$ on which $f$ and $\wt{f}$ are topologically conjugate. This family of homeomorphisms induces a bijection between the complementary components of $\De_n$ and $\wt{\De}_n$ for any large $n$, and this bijection is compatible with the dynamics.

Let $\rho=(f^k,W,W')$ be a renormalization triple of $f$. By the bijection between the complementary components of Newton graphs of $f$ and $\wt{f}$, we get a corresponding renormalization triple $\wt{\rho}=(\wt{f}^k,\wt{W}.\wt{W}')$, where $\wt{W},\wt{W}'$  correspond to $W,W'$ respectively. Let $P$ and $\wt{P}$ denotes the polynomials hybrid equivalent to $\rho$ and $\rho'$ respectively. Since $f$ and $\wt{f}$ have only real renormalizable critical points, then $P$ and $\wt{P}$ are real polynomials of the same degree with all critical points real. As $f,\wt{f}$ are combinatorial equivalent and have the same kneading sequences, there is a bijection between the critical points of $P,\wt{P}$, also keeping the orders. The same kneading sequences of $f$ and $\wt{f}$ still implies the same kneading sequences of $P$ and $\wt{P}$.  By a standard kneading theory (see e.g. \cite{MT}), the real polynomials $P,\wt{P}$ are topologically conjugate on $\R$. Since the parabolic periodic points between $f$ and $\wt{f}$ are supposed to have a one-to-one correspondence, the conjugacy on $\R$ of $P$ and $\wt{P}$ is a bijection on their parabolic periodic points. Thus, applying Theorem A, the polynomials $P$ and $\wt{P}$ are quasi-conformally conjugate on $\C$.

Using the argument above to every renormalization triple, we get that $f$ and $\wt{f}$ satisfy the properties of Theorem B, and hence quasi-conformally conjugate on a neighborhood of the union of Julia set and all Fatou components not in the roots-basin.

Let $f$ be a real Newton map with all renormalizable critical points real.  Then each polynomial hybrid equivalent to a renormalization triple of $f$ is real with only real critical points. By \cite[Theorem 1.4]{CvST}, this polynomial has no invariant line fields on the  Julia set. It then follows from Theorem B that $f$ has no invariant line fields on its Julia set.
\end{proof}

\section{Density of hyperbolicity of real Newton maps}\label{sec:density}

\subsection{Perturbation of Newton graphs}
Let $f$ be a rational map of degree $d\geq2$. Suppose that  $z$ is an attracting/repelling periodic point of $f$.
The implicit function theorem implies immediately the following result.
\begin{lemma}\label{lem:continuation}
There exists a small neighborhood $V$ of $f$ (in the space of rational maps of degree $d$) and a holomorphic map $\zeta_z:V\to \C$ such that $\zeta_z(f)=z$ and $\zeta_z(g)$ is the unique attracting/repelling periodic points $g$ near $z$ with the same  period as those of $z$.
\end{lemma}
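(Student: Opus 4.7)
Let $p$ denote the exact period of $z$, so $f^p(z)=z$ while $f^i(z)\neq z$ for $0<i<p$, and set $\lambda=(f^p)'(z)$. Since $z$ is attracting or repelling, $|\lambda|\neq 1$, and in particular $\lambda\neq 1$. The space $\mathrm{Rat}_d$ of rational maps of degree $d$ is, in a neighborhood of $f$, a complex manifold, and the evaluation map $(g,w)\mapsto g^n(w)$ is holomorphic in both arguments (since iteration of rational maps is holomorphic in the parameters and in $w$, with the usual care about poles, which we avoid by working on a small disk around $z$).

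The plan is to apply the holomorphic implicit function theorem to
\[
F(g,w)\;:=\;g^p(w)-w,
\]
viewed as a holomorphic map from a neighborhood of $(f,z)$ in $\mathrm{Rat}_d\times\C$ into $\C$. One has $F(f,z)=0$, and
\[
\partial_w F(f,z)\;=\;(f^p)'(z)-1\;=\;\lambda-1\;\neq\;0,
\]
so the implicit function theorem furnishes an open neighborhood $V\times D$ of $(f,z)$ and a unique holomorphic map $\zeta_z\colon V\to D$ with $\zeta_z(f)=z$ and $F(g,\zeta_z(g))\equiv 0$ on $V$. By construction $\zeta_z(g)$ is the unique solution of $g^p(w)=w$ with $w\in D$, giving local uniqueness near $z$.

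It remains to check that $\zeta_z(g)$ is actually a periodic point of exact period $p$ for $g$ and of the same attracting/repelling type. The points $z,f(z),\ldots,f^{p-1}(z)$ are pairwise distinct, hence have pairwise positive distance. Since $(g,w)\mapsto g^i(w)$ is continuous and $\zeta_z$ is continuous with $\zeta_z(f)=z$, after shrinking $V$ we may assume that for every $g\in V$ the orbit $\zeta_z(g),g(\zeta_z(g)),\ldots,g^{p-1}(\zeta_z(g))$ stays close to the corresponding $f$-orbit and is in particular pairwise distinct; this forces the exact period under $g$ to be $p$. Likewise the multiplier $(g^p)'(\zeta_z(g))$ depends continuously on $g$, so a further shrinking of $V$ guarantees $|(g^p)'(\zeta_z(g))|<1$ (respectively $>1$) whenever $|\lambda|<1$ (respectively $>1$).

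There is no real obstacle here: the only thing to verify is the non-degeneracy $\partial_w F(f,z)\neq 0$, which is precisely where the hypothesis $|\lambda|\neq 1$ enters; everything else is continuity combined with the local uniqueness provided by the implicit function theorem.
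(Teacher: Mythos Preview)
Your proof is correct and follows exactly the approach the paper has in mind: the paper does not give a proof but simply states that ``the implicit function theorem implies immediately the following result,'' and your argument is precisely the standard fleshing-out of that remark. The key nondegeneracy $\partial_w F(f,z)=(f^p)'(z)-1\neq 0$ is guaranteed by the attracting/repelling hypothesis, and the verification of exact period and type by continuity is routine.
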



Let $\UUU(f)$ be the union of grand orbits of some periodic attracting domains $U_1(f)\ldots,U_k(f)$ of $f$. For each $i$, the attracting periodic point $\xi_i\in U_i(f)$ belongs to a unique periodic attracting domain $U_i(g)$ of $g$ for all $g$ close to $f$. We denote by $\UUU(g)$ the union of grand orbits  by $g$ of $U_1(g),\ldots,U_k(g)$. Recall that $g$ is \pf in $\UUU(g)$ if every component of $\UUU(g)$ contains at most one postcritical point of $g$.

\begin{lemma}\label{lem:perturbation-rational}
Let $f$ be a rational map, with $\UUU(f),\ \UUU(g)$ defined as above.  Assume that $\Lambda$ is a connected set containing $f$ such that all $g\in\Lambda$ are \pf in $\UUU(g)$. Let $U(f)$ be any component of $\UUU(f)$ with the center $x$.
\begin{enumerate}
 \item There exists a continuous map $x(g)$ on $\Lambda$ near $f$, such that $x(f)=x$,  ${\rm deg}(f|_{x})={\rm deg}(g|_{x(g)})$, and $x(g)$ is the center of the unique component $U(g)$ of $\UUU(g)$ which has the same period and preperiod as  $U(f)$. We call $U(g)$ the \emph{deformation of $U(f)$ at $g$}.
      \item Let $I(g)$ be a preperiodic internal ray of $g$ in $U(g)$ with fixed preperiod and period, where $U(g)$ is the deformation of $U(f)$ at $g$. If the landing point $z(g)$ of $I(g)$ converge to $z$, and  $z$ is eventually repelling for $f$, then
          $$I:=\limsup_{\Lambda\ni g\to f}I(g):=\{w\mid \exists: \Lambda\ni g_n\to f,w_n\in I(g_n);\ {\rm s.t}\ w_n\to w\}$$
          is the internal ray of $f$ in $U(f)$ landing at $z$.
          \end{enumerate}
\end{lemma}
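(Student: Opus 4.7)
\medskip

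\noindent\textbf{Proof proposal.}
For part (1) I would argue by induction on the preperiod of $U(f)$. In the base case $U(f)$ is periodic of period $m$, and since $f$ is \pf in $\UUU(f)$, the return map $f^m|_{U(f)}$ is Böttcher conjugate to $z\mapsto z^D$ with $D\geq 2$; consequently the center $x$ is the unique superattracting periodic point of period $m$ inside $U(f)$. Lemma \ref{lem:continuation} produces a holomorphic selection $\zeta_x(g)$ of attracting periodic points of the same period converging to $x$. The \pf-in-$\UUU(g)$ hypothesis forces the critical point in the cycle of $U(g)$ to agree with $\zeta_x(g)$, so $\zeta_x(g)$ is superattracting and is the center of $U(g)$. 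Take $x(g):=\zeta_x(g)$. For the inductive step, assume $\hat U(f):=f(U(f))$ has smaller preperiod and that the deformation $\hat U(g)$ with center $\hat x(g)$ is already constructed. Then $f^{-1}(\hat x)\cap U(f)=\{x\}$ with multiplicity $d_{U(f)}$, so Hurwitz's theorem applied to $g-\hat x(g)$ on a small disk around $x$ produces a unique $x(g)$ gathering these $d_{U(f)}$ preimages, and $g:U(g)\to \hat U(g)$ is proper of degree $d_{U(f)}$; hence $x(g)$ is the center of $U(g)$. Continuity is immediate from the implicit function theorem in both steps, and the uniqueness of $U(g)$ among components of $\UUU(g)$ with the correct preperiod and period follows because the preimages along the (finite) backward orbit of the periodic cycle move continuously.

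For part (2), I would first show a Carathéodory-type continuity of Böttcher coordinates: once the centers $x(g)$ and a single preperiodic internal ray direction are normalized continuously in $g$, the inverse Böttcher maps $\phi_{U(g)}^{-1}:\D\to U(g)$ converge to $\phi_{U(f)}^{-1}$ uniformly on compact subsets of $\D$. Inductively this follows from the functional equation in Lemma \ref{lem:coordinate} together with the convergence already established for periodic components (where $\phi_{U(g)}$ comes from the superattracting linearization of Böttcher type). Fix the angle $\theta$ of the ray $I(f)^{\flat}:=\phi_{U(f)}^{-1}([0,1)e^{i\theta})$ corresponding to $I(g)$; the above convergence implies $I(g)\to I(f)^{\flat}$ uniformly on compact subsets of $[0,1)$, so $I\supseteq I(f)^{\flat}$.

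Because $I(g)$ has fixed preperiod and period, $I(f)^{\flat}$ is preperiodic under $f$ and by Douady-Hubbard's theorem lands at a point $z^{\flat}\in\partial U(f)$ that is eventually repelling or eventually parabolic. To identify $z^{\flat}$ with $z$ I would invoke the classical stability of landing of preperiodic internal rays at eventually repelling cycles: Lemma \ref{lem:continuation} continues the eventually repelling cycle through the orbit of $z^{\flat}$, and a local linearization near this cycle, pulled back by $g$ along the prescribed preperiodic combinatorics, shows that $I(g)$ must land at the continuation $\zeta_{z^{\flat}}(g)\to z^{\flat}$. Since by hypothesis $z(g)\to z$ and both sequences are landing points of the same ray $I(g)$, we get $z^{\flat}=z$.

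It remains to exclude spurious limit points, that is, to show that any $w=\lim w_n$ with $w_n\in I(g_n)$ and $g_n\to f$ lies in $I(f)^{\flat}\cup\{z\}$. Points with $w\in U(f)$ are covered by the uniform convergence on compact subsets of $\D$, so the only issue is $w\in\partial U(f)$. I would handle this by controlling the hyperbolic length in $U(g)$ of the terminal segment of $I(g)$ near its landing: by the linearization of $g^{\mathrm{period}}$ at the (persistent) repelling cycle, this terminal segment has bounded hyperbolic diameter in a uniform neighborhood of $\zeta_{z^{\flat}}(g)$, and pulling back by $g$ preserves hyperbolic length. Since the hyperbolic metric on $U(g)$ varies continuously with $g$, any limit $w\in\partial U(f)$ must coincide with $z=z^{\flat}$. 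This uniform control near the landing is the step I expect to be the main obstacle; the parabolic case (were it not excluded by hypothesis) would require a snail-lemma refinement, but the eventually repelling assumption lets the linearization argument do the work cleanly.
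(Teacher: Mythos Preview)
Your argument for part (1) is essentially the paper's: induction on the preperiod, the periodic base case via Lemma~\ref{lem:continuation}, and the inductive step via Rouch\'e/Hurwitz to collect the $d_{U(f)}$ preimages into a single center.

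For part (2) your route diverges from the paper's. The paper also argues by induction on the preperiod: for the periodic base case it simply cites Goldberg--Milnor \cite[Appendix B]{GM}, and for the inductive step it observes that $\partial U(f)\cap\limsup_{g\to f} I(g)$ is compact, connected, contains $z$, and is mapped by $f$ into $\partial f(U(f))\cap\limsup_{g\to f} g(I(g))$, which by the inductive hypothesis is the single point $f(z)$; since $f^{-1}(f(z))$ is discrete, the boundary limit set must equal $\{z\}$. This is a very short topological argument that sidesteps any quantitative control. Your approach instead treats all preperiods at once by pulling back the repelling linearization along the finite backward orbit and bounding the ``terminal segment'' of $I(g)$ near its landing point. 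This is viable and is in fact what underlies the Goldberg--Milnor result you would otherwise be citing, so it is more self-contained; the price is that the analytic bookkeeping is heavier, and two points in your write-up need tightening. First, you use that $z^{\flat}$ is eventually repelling before you know $z^{\flat}=z$, whereas the hypothesis only gives this for $z$; you can close this by noting that $z\in\limsup I(g)$ (since $z(g)\to z$) and that each subsequential Hausdorff limit of $\overline{I(g_n)}$ is a connected set joining $x$ to $z$ whose intersection with $U(f)$ is $I(f)^{\flat}$, which forces $z$ to lie in $\overline{I(f)^{\flat}}$ and hence $z^{\flat}=z$. Second, the phrase ``bounded hyperbolic diameter'' for the terminal segment is not right (its hyperbolic length in $U(g)$ is infinite); what the linearization actually gives is a uniform \emph{Euclidean} localization of the tail of $I(g)$ in a shrinking neighborhood of $\zeta_{z}(g)$, which is what you need to rule out spurious boundary limits. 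With these fixes your direct argument goes through; the paper's induction is shorter because the inductive step replaces all of this by a one-line connectedness argument.
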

\begin{proof}
For (1), we first assume $x$ is periodic. Let $x(g):=\xi_x(g)$ be the continuation of $x$ in Lemma \ref{lem:continuation}, and chose $U(g)$ as the Fatou component of $g$ containing $x(g)$. Clearly $U(g)$ is a component of $\UUU(g)$. By the \pf of $g$ in $\UUU(g)$, the point $x(g)$ is the center of $U(g)$.
Let us now deal with the preperiodic case by induction. Let us assume that $y:=f(x)$, the function and $y(g)$ satisfies the properties of the lemma: we need to find $x(g)$ such that
$x(g)\to x$, $g(x(g))=y(g)$ and ${\rm deg}(f|_x)=\text{deg}(g|_{x{g}})$ as $\Lambda\ni g\to f$.

Let $U(g)$ be the unique component of $g$ containing $x$, which is easily seen a component of $\UUU(g)$. We define $x(g)$ to be the center of $U(g)$. Since $y(g)\to y=f(x)$ and $U(g)$ contains $x$, then, for $g$ close to $f$, we have $g(U(g))$ contains $y(g)$. It follows that $g(x(g))=y(g)$.
Set $\delta:={\rm deg}(f|_x)$. By Rouch\'{e}'s theorem, any given small neighborhood of $x$  contains exactly $\delta$ preimages by $g$ of $y(g)$ (counting with multiplicity) for $g$ close to $f$. Note that all these preimages belong to $U(g)$, and are the centers of some Fatou components of $g$. So these preimages must coincide with $x(g)$. The proof of point (1) is then completed.

To see point (2), if $I$ is period, the conclusion holds by Goldberg and Milnor's proof in \cite[Appendix B]{GM}. By induction, it suffices to prove $\limsup_{g\to f} I(g)=I$ under the assumption that $\limsup_{g\to f} g(I(g))=f(I)$. As $g\to f$, we can choose  B\"{o}ttcher coordinates $\varphi_f$ of $U(f)$ and $\varphi_g$ of $U(g)$ such  that $\varphi_g^{-1}:\D\to U(g)$ converge
uniformly on compact sets to $\varphi_f^{-1}:\D\to U(f)$. It follows that $I':=U(f)\cap\limsup_{g\to f} I(g)$ is an internal ray of $U(f)$. On the other hand, note that $\partial U(f)\cap\limsup_{g\to f} I(g)$ is compact, connected and contains the point $z$, and the map $f$ sends $\partial U(f)\cap\limsup I(g)$ into the set $\partial f(U(f))\cap\limsup g(I(g))$, which is by induction a singleton. Then we get $\partial U(f)\cap\limsup_{g\to f} I(g) =\{z\}$, and hence $I'=I$.
\end{proof}

\begin{proposition}[perturbation of Newton graph]\label{pro:perturbation-Newton-graph}
Let $f$ be a roots-basin \pf Newton map of degree $d$. Let $\Lambda\ni f$ be a connected set contained in a small neighborhood of $f$ (in the space of Newton maps of degree $d$), such that all maps $g\in\Lambda$ are roots-basin \pf. Let $\De$ be the Newton graph of $f$ of level $k$. Suppose that, for each Julia critical points $c\in\De$, there exists a continuous map ${\bf c}:\Lambda\to\C$ with ${\bf c}(f)=c$, such that \begin{enumerate}
\item ${\rm deg}(g|_{{\bf c}(g)})={\rm deg}(f|_c)$ for all $g\in\Lambda$;
\item if $c_i,c_j$ are two Julia critical points in $\De$ with $f^k(c_i)=c_j$, then $g^k({\bf c}_i(g))={\bf c}_j(g)$ for all $g\in \Lambda$;
\item if $c$ is a Julia critical point in $\De$ with $f^k(c_i)=\infty$, then $g^k({\bf}c(g))=\infty$ for all $g\in \Lambda$.
\end{enumerate}
Then, we have a continuous map $h:\Lambda\times \De\to \ov{\C}$ such that $h_g:\De\to \ov{\C}$ is injective with the image $h_g(\De)$ equal to the Newton graph of $g$ of level $k$, and the equality $h_g\circ f=g\circ h_g$ holds on $\De$.
\end{proposition}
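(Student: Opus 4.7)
The strategy is an induction on the level $n$ from $0$ up to $k$, constructing continuous families of homeomorphisms $h_g^{(n)}:\De_n\to\ov{\C}$ whose images are the level-$n$ Newton graphs of $g$ and which satisfy $g\circ h_g^{(n)}=h_g^{(n-1)}\circ f$ on $\De_n$. The desired $h$ is then $h^{(k)}$.

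For the base case $n=0$, the graph $\De_0$ is the union of the $d$ fixed internal rays in the immediate basins $B_1,\ldots,B_d$ together with their common landing point $\infty$. Lemma \ref{lem:continuation} continues each attracting fixed point $a_i$ (of multiplier $1-1/n_i$) to $a_i(g)$ and fixes $\infty$; Lemma \ref{lem:perturbation-rational}(1), applied to the grand orbit of the immediate roots-basin, continues each $B_i$ to a Fatou component $B_i(g)$ of $g$ centered at $a_i(g)$; and Lemma \ref{lem:perturbation-rational}(2) continues each fixed internal ray to the fixed internal ray of $g$ in $B_i(g)$ landing at $\infty$. Parametrizing each such ray by the B\"ottcher coordinate of $B_i(g)$ yields a continuous $h_g^{(0)}$ that conjugates the dynamics on $\De_0$.

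For the inductive step from $n-1$ to $n$, every edge of $\De_n$ not lying in $\De_{n-1}$ is an arc whose $f$-image is contained in $\De_{n-1}$, and every new vertex of $\De_n$ is of one of three types: a new center of a Fatou component in the roots-basin, continued by Lemma \ref{lem:perturbation-rational}(1); a new repelling preimage of $\infty$, continued by applying Lemma \ref{lem:continuation} inductively along the inverse orbit (a standard implicit-function-theorem argument at each step, since the predecessor is already repelling); or a Julia critical point $c\in\De$, continued by the given map ${\bf c}$. The conjugation relation uniquely determines each lifted edge as the arc emanating from its (continued) endpoint whose $g$-image equals $h_g^{(n-1)}$ applied to the $f$-image of the original edge, and Lemma \ref{lem:perturbation-rational}(2) identifies this arc with the expected internal ray of $g$ and shows that it varies continuously with $g\in\Lambda$.

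The main obstacle is consistency at the critical vertices. At a critical point $c$ with local degree $\delta:={\rm deg}(f|_c)$, exactly $\delta$ arcs of $\De_n$ meet $c$ in the standard locally-branched-cover pattern, and the analogous configuration must appear at ${\bf c}(g)$ in the $g$-graph. Hypothesis (1) matches the local degree ${\rm deg}(g|_{{\bf c}(g)})=\delta$, while hypotheses (2) and (3) preserve the dynamical labels of the edges emanating from $c$, so the cyclic arrangement of arcs around ${\bf c}(g)$ is forced to correspond to that around $c$. Injectivity of $h_g$ then follows from injectivity of $h_g^{(n-1)}$ combined with uniqueness of internal rays in each Fatou component of $g$, and continuity of $h$ on $\Lambda\times\De$ is inherited from continuity of all the continuations and of the B\"ottcher parametrizations employed.
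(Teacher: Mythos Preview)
Your inductive organization (base case $\De_0$ via B\"ottcher coordinates, then lift level by level using Lemmas~\ref{lem:continuation} and~\ref{lem:perturbation-rational}) is essentially the paper's approach, which instead first continues all vertices at once and then extends over edges by lifting from $\De_0$. The ingredients are the same.

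However, there is a genuine gap: you never argue that $h_g^{(n)}(\De_n)$ is \emph{all} of $\De_n(g)$, only that it is a connected subgraph of $g^{-n}(\De_0(g))$ containing $\infty$, hence contained in $\De_n(g)$. Nothing in your lift rules out that $\De_n(g)$ has strictly more edges than $\De_n$. Concretely, an edge $e$ of $\De_{n-1}$ has $d$ preimage arcs under $f$, of which only those lying in the $\infty$-component belong to $\De_n$; for $g$ the corresponding edge $h_g^{(n-1)}(e)$ also has $d$ preimage arcs, but you have not shown that the same number of them lie in the $\infty$-component $\De_n(g)$. The paper closes this gap by a separate limit argument: assuming $h_{g}(\De)\subsetneq\De_k(g)$ for $g$ arbitrarily close to $f$, one picks an extra edge $e_n$ of $\De_k(g_n)$ with one endpoint $v_n=h_{g_n}(v)$ in the image and the other endpoint $u_n$ outside it; by Lemma~\ref{lem:perturbation-rational}(2) the $e_n$ converge to an internal ray $e$ of $f$ landing at $u=\lim u_n$, and since $g_n^k(u_n)$ is fixed one gets $f^k(u)$ fixed, forcing $u\in\De$ and hence $u_n\in h_{g_n}(\De)$ for large $n$, a contradiction. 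You should either reproduce this argument or supply an alternative count showing $|\text{edges}(\De_n(g))|=|\text{edges}(\De_n)|$ at each step.
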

\begin{proof}
Let $V_\De$ denote the vertex set of $\De$, consisting of the iterated preimages of the fixed points of $f$ contained in $\De$.  Since all maps $g\in\Lambda$ are roots-basin postcritically-finite, and the Julia points in $V_\De$ are eventually repelling, by Lemma \ref{lem:continuation}, Lemma \ref{lem:perturbation-rational}.(1), and the assumption of the proposition,  we get a continuous function $h:\Lambda\times V_\De\to \ov{\C}$, such that $h_f=id|_{V_\De}$ and the equality $h_g\circ f=g\circ h_g$ holds on $V_\De$.

Let $e$ be an edge of $\De$. Then it is an internal ray in a component $U$ of the roots-basin, with one endpoint, say $x$, the center of $U$, and the other endpoint, denoted by $y$, on the boundary of $U$. By Lemma \ref{lem:perturbation-rational} and the assumption (1), the point $h_g(x)$ is the center of the deformation $U(g)$ of $U$, and $h_g(y)$ belongs to $\partial U(g)$. Thus, we can
continuously extends $h_g$ to $\De$, such that for each edge $e$ of $\De$, with the notations above,  let $h_g$ map $e$ homeomorphically onto the internal ray in $U(g)$ with endpoints $h_g(x),h_g(y)$. Then $h_g:\De\to \ov{\C}$ is clearly an injective. We may further require $h_g\circ f=g\circ h_g$ on $\De$.
To see this, one can first extends $h_g$ on $\De_0$ (Newton graph of $f$ at level 0) using the B\"{o}ttcher coordinates, such that $h_g\circ f=g\circ h_g$ holds on $\De_0$, and then lift $h_g$ by the dynamics of $f$ and $g$ to $\De$.
By Lemma \ref{lem:perturbation-rational} again, such constructed map $h:\Lambda\times \De\to\ov{\C}$ is continuous.

To finish the proof of this proposition, it remains to show $h_g(\De)$ is the Newton graph $\De_k(g)$ of $g$ at level $k$. By the argument above, we have known that $h_g(\De)$ is a sub-graph of $\De_k(g)$.  Suppose the conclusion is false. Then, we can find a sequence $\{g_n\in\Lambda,n\geq1\}$ converging to $f$,
 a vertex $v$ of $\De$, and a vertex $u_n$ of $\De_k(g)$ not in $h_g(\De)$, such that $v_n:=h_{g_n}(v)$ and $u_n$ are the endpoints of an edge $e_n$ of $\De_k(g_n)$. By taking subsequences if necessary, Lemma \ref{lem:perturbation-rational} shows that the edge $e_n$ converge to an internal ray $e$ of $f$ which joins $v$ and $u=\lim_{n\to\infty} u_n$. Since $u_n\in V_{\De_k(g_n)}$, then $g_n^k(u_n)$ is a fixed point of $g_n$. It follows that $f^k(u)$ is a fixed point of $f$. Combining the fact that $\De\cup e$ is connected, we get  $u\in \De$, and hence $u_n\in h_{g_n}(\De)$, a contradiction to the choice of $u_n$.
\end{proof}

\subsection{Normalized form of Newton maps}
Note that, since Newton maps have the fixed point $\infty$, two Newton maps are conformal conjugate if and only if they are affine conjugate.
Consider the family of Newton maps
\[\FFF_d:=\{f_p\mid \text{$p$ has degree $d$ and $d$ distinct roots}.\}\]
If $f_p\in\FFF_p$, the roots of $p$ are supper-attracting fixed points of $f_p$ (see Subsection \ref{sec:basic}). Moreover, any general Newton map is quasi-conformally conjugate to a map in $\FFF_d$ outside their immediate basins of roots.

Let $f_p$ and $f_{\wt{p}}$ be two maps in $\FFF_d$.
It is easy to see that: there exists an affine map $\g$ such that $f_{\wt{p}}\circ\g=\g\circ f_p$ if and only $\g$ sends the roots of $p$ onto the roots of $\wt{p}$. So we can use the normalized property of $f_p$ as follows:
\[\emph{the sum of the roots of $p$ is $0$ and the product of the roots of $p$ is $1$}.\]
The space of normalized Newton maps can be written as
$$\NNN_d:=\{f_p\mid p(z)=z^d+a_{d-2}z^{d-2}+\cdots+a_1z+1,\ \text{with $d$ pairwise distinct roots}\}.$$
It is the complement in $\C^{d-2}$ of the discriminant of $p$.

Notice that any Newton map $f\in\FFF_d$ is conformal equivalent to a $f_p$ with $p$ a monic, centered polynomial of degree $d$, and such $f_p$ is conformal equivalent to a normal form if and only if its roots are all non-zero. Hence, if $p$ has zero root, it can not be written  a normal form under conformal conjugacy, but it can be approximated by the maps in $\NNN_d$.

In the real case, we denote
$$\NNN_d(\R):=\{f\in \NNN_d\mid \text{the coefficients of $f$ are real}\}.$$
Then $\NNN_d(\R)$ is the complement in $\R^{d-2}$ of the discriminant of the real polynomials in the normal form. Let $\YYY$ denote the subfamily of $\NNN_d(\R)$ consisting of maps with all free critical points (critical points not in the roots-basin) real.
We endow $\YYY$ the topology of subspace in $\R^{d-2}$, i.e., a sequence $\{f_{p_n},n\geq1\}$ converges to $f_p$ in $\YYY$ if the coefficients of $p_n$ converge to those of $p$. 
By the discussion above, to prove Theorem \ref{thm:main}, it is enough to show that the hyperbolic maps in $\YYY$ are dense.

\subsection{The proof of Theorem \ref{thm:main}}



For every $f\in\YYY$, we define $\tau(f)$ as the number of critical points of $f$ which are contained in
the basins of the  attracting cycles (not only roots-basin). Note that the function $\tau:\YYY\to \N\cup\{0\}$ is lower semi-continuous.
Let
\[\XXX:=\{f\in\YYY:\text{$\tau$ is locally maximal at $f$}\}.\]
As $\tau$ is uniformly bounded above, the set $\XXX$ is dense in $\YYY$. Moveover, from the lower semi-continuity of $\tau$, the map $\tau$ is constant in a neighborhood of any $f\in\XXX$. Thus $\XXX$ is open, dense in $\YYY$, and $\tau$ is constant on any connected component $X$ of $\XXX$, which we denote by $\tau(X)$.
Thus, to prove Theorem \ref{thm:main}, we just need to show $\tau(X)=2d-2$ for every  component $X$ of $\XXX$.

On the contrary, let $X$ be a component of $\XXX$ with $\tau(X)=:m<2d-2$. 
By a standard quasi-conformal deformation, the map $f$ can be continuously deformed in $X$ to a map which is \pf in its attracting basins (not only the roots-basin). Without loss of generality, we assume the initial $f$ is \pf in its attracting basins. 

We write the critical points of $f$ as $c_1,\ldots,c_{r_1};c_{r_1+1},\ldots,c_r;c_{r+1},\ldots,c_{r+s}$ with multiplicities $\mu_1,\ldots,\mu_{r_1};\mu_{r_1+1},\ldots,\mu_r;\mu_{r+1},\ldots,\mu_{r+s}$ respectively, such that 
 \begin{enumerate}
 \item $c_1,\ldots,c_{r_1}$ lie in the roots-basin of $f$, with $c_1,\ldots,c_d$ the supper-attracting fixed points;
 \item $c_{r_1+1},\ldots,c_r$ are free critical points in some attracting basins other than the roots-basin;
 \item $c_{r+1},\ldots,c_{r+s}$ are the remaining free critical points (not in the attracting basins).
 \end{enumerate}
Then $c_{r_1+1},\ldots,c_{r+s}$ are real, and the numbers $m,r,s$ satisfy $m=\mu_1+\ldots+\mu_r$ and $\mu_1+\cdots+\mu_{r+s}=2d-2$.


Consider all the real rational maps $g$ (not only Newton maps) of degree $d$ near $f$ such that
\begin{enumerate}
\item $c_1(g),\ldots,c_{r+s}(g)$ are critical points of $g$ with multiplicities  $\mu_1,\ldots,\mu_{r+s}$ respectively;
\item $g(\infty)=\infty, c_1(g)+\ldots+c_d(g)=0$ and $c_1(g)\cdots c_d(g)=1$ (normalized property).
\end{enumerate}
Then the component $Z$ of such maps containing $f$ is a sub-manifold of dimension $r+s$ embedded in the space of real rational maps of degree $d$ (see, for example, \cite[Theorem 2.1]{LSS}). Notice also that $c_i(g)$ is real for $i=r_1+1,\ldots,r+s$.

For each $i=1,\ldots,r$, there exists a unique number $k_i\in\{1,\ldots,r\}$ and a number $q_i$ such that $f^{q_i}(c_i)=c_{k_i}$ and $f^{j}(c_i)$ are not critical points for $j=1,\ldots,q_i-1$. In fact, the triples $\{(i,k_i,q_i),i=1,\ldots,r\}$ records the critical relation of $f$ in the attracting basins. This critical relation induces $r$ codimension $1$ subvarieties  of $Z$, defined by the equations
\[g^{q_i}(c_i(g))=c_{k_i}(g),\ {\rm with}\ g\in Z {\rm\ and\ }\ i=1,\ldots,r.\]
Let $W$ denote the component  of the intersection of the $r$ subvarieties of $Z$ which contains $f$. Following \cite[Theorem 3.2]{LSS}, locally near $f$, the set $W$ is a sub-manifold  of dimension $s$ embedded in $Z$. This critical relation and the normalized property also implies that all maps in $W$ near $f$ are Newton maps in $\NNN_d(\R)$. Notice that, when $g\in W$ is close to $f$, its critical points $c_1(g),\ldots,c_{r_1}(g)$ belong to the roots-basin, and the remaining critical points $c_{r_1+1}(g),\ldots,c_{r+s}(g)$ are real. It means the free critical points of $g$ are real, i.e., $g\in X$. Hence, we may assume $W\subseteq X$. 

Since the number of critical points in the attracting basins is constant in $W$, then any $g\in W$
 is \pf in its attracting basins, with $c_1(g),\ldots,c_{r_1}(g)$ in the roots-basin and $c_{r_1+1}(g),\ldots,c_r(g)$ in the attracting basins other than the roots-basin. Note that the maps in $W$ have no irrational indifferent periodic points (because the free critical points are real), then the critical points $c_{r+1}(g),\ldots,c_{r+s}(g)$ are contained in either the parabolic basins or the Julia set. By Proposition \ref{pro:rigidity}, the maps in $W$ carry no invariant line fields on the Julia sets. Therefore, Theorem 6.9 in \cite{McS} is reduced to  the following form in our case.

 \begin{lemma}\label{lem:dimension}
 For any $g\in W$, let $QC(g)$ denote the component containing $g$ of the set $\wt{g}\in \NNN_d$ which are quasi-conformally conjugate to $g$. Then $QC(g)$ is an embedded complex orbiford in $\NNN_d$ with (complex) dimension $b_{cp}-b_p$, where $b_{cp}$ denotes the number of grand orbit of critical points of $g$ contained in the parabolic basins, and $b_p$ denotes the number of parabolic cycles.
 \end{lemma}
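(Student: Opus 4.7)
The plan is to deduce the statement from Theorem~6.9 of \cite{McS}, which gives a general formula for the complex dimension of the quasiconformal deformation space of a rational map as a sum of five independent contributions: one from invariant line fields on the Julia set, and, for each attracting cycle, each parabolic cycle, each Siegel disk, and each Herman ring, a separate local contribution computed from the grand-orbit quotient of the corresponding basin or rotation domain. Since $\NNN_d$ is a normalized slice on which the M\"obius ambiguity has already been removed, $\dim_{\C} QC(g)$ inside $\NNN_d$ coincides with the moduli-space dimension returned by that formula.

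The strategy is then to show that for $g\in W$ four of the five contributions vanish, and only the parabolic term survives with value $b_{cp}-b_p$. First, by Shishikura's theorem (recalled in Subsection~\ref{sec:basic}), the Julia set of any Newton map is connected, so every Fatou component is simply connected; in particular $g$ has no Herman rings. Second, $g\in W\subseteq \YYY$ has all free critical points real, so $g$ has no irrational indifferent periodic cycle and hence no Siegel disk, as observed just before the lemma. Third, the invariant-line-field contribution on $J(g)$ vanishes by Proposition~\ref{pro:rigidity}.

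For the attracting contribution, the defining equations $g^{q_i}(c_i(g))=c_{k_i}(g)$ of $W$ make the finite map $\sigma\colon i\mapsto k_i$ on $\{1,\dots,r\}$ contain at least one cycle; tracing that cycle through the $g$-dynamics turns the corresponding critical points $c_i(g)$ into $g$-periodic critical points, which, being in an attracting basin, must lie on the attracting cycle itself. Consequently every attracting cycle of $g$ is super-attracting, and since $g$ is \pf in its attracting basins every grand orbit of critical points in such a basin collapses to the super-attracting cycle, so the McMullen--Sullivan contribution from each attracting basin is zero and the total $n_{AC}(g)$ vanishes. What remains is the parabolic term, which the same formula identifies as (number of grand orbits of critical points in the parabolic basin)$-1$ per parabolic cycle; summing over the $b_p$ parabolic cycles gives exactly $b_{cp}-b_p$. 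The main technical point I expect is the careful matching of terms in the McMullen--Sullivan formula, and in particular verifying that $QC(g)$ remains inside the submanifold $W\subseteq\NNN_d$, so that the critical-relation constraints defining $W$ are preserved under quasiconformal deformation and the super-attracting structure of every attracting basin genuinely survives.
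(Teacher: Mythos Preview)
Your proposal is correct and follows essentially the same route as the paper: the paper simply states (in the sentence immediately preceding the lemma) that Theorem~6.9 of \cite{McS} reduces to this form once one knows that maps in $W$ are postcritically finite in their attracting basins, have no irrationally indifferent cycles, and carry no invariant line fields on the Julia set, and then records the lemma without further argument. You have supplied the verification of each vanishing term (Herman rings via Shishikura, Siegel disks via reality of free critical points, line fields via Proposition~\ref{pro:rigidity}, attracting contribution via postcritical finiteness in the attracting basins), which is exactly what the paper leaves implicit. Your closing worry about $QC(g)\subseteq W$ is unnecessary: quasiconformal conjugacies automatically preserve critical multiplicities and critical orbit relations, so the super-attracting and attracting-basin-\pf structure persists throughout $QC(g)$ without any extra check.
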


 \begin{proof}[Completion of the proof of Theorem \ref{thm:main}]
 For positive integers $n$ and $1\leq i,j\leq s$, consider
\[M_{n,i,j}=\{g\in W:g^n(c_{r+i}(g))=c_{r+j}(g)\}\text{ and }M_{n,i,\infty}=\{g\in W:g^n(c_{r+i}(g))=\infty\}.\]
Then each of these $M_{n,i,j}$ and $M_{n,i,\infty}$ is a subvariety of dimension at most $s-1$. We claim that at least one of these subvarieties has dimension $s-1$.

On the contrary, the set $W-\bigcup_{n,i,j}(M_{n,i,j}\cup M_{n,i,\infty})$ is arc-connected by the following fact (\cite[Fact 2.1]{KSS1}). \\
Fact. \emph{Let $m$ be a positive number and $B$ a Euclidean ball in
$\R^m$. Let $M_i, i=1,2,\ldots,$ be embedded real analytic sub-manifolds of $B$ such that $dim(M_i)\leq m-2$. Then $B-\bigcup_{i=1}^\infty M_i$ is arc-connected.}

By shrinking $W$ if necessary, for any $g,\wt{g}$ in this complementary set, we have
\begin{itemize}
\item their canonical Newton graphs have the same level, and $g,\wt{g}$ conjugate on their canonical Newton graphs (by Lemma \ref{lem:perturbation-rational});
\item their is a bijection between their critical points which keeps the orders: this is duo to the construction of $W$; 
\item their corresponding critical points have the same itineraries with respect to their canonical Newton graphs;
\item they have the same kneading sequences;
\item there is a bijection between their parabolic periodic points: to see this, by the first three properties, we get a bijection between their renormalization filled-in Julia sets $K_{i,g}$ and $K_{i,\wt{g}}$; and by the fourth property, there is a topological conjugacy between $g^{k_i}$ and $\wt{g}^{k_i}$ on $I_{i,g}:=K_{i,g}\cap \R$ and $I_{i,\wt{g}}:=K_{i,\wt{g}}\cap \R$ respectively, where $k_i$ denotes the renormalization period; since $\tau(X)$ is constant in $X$,  this conjugacy must send parabolic points of $g$ to those of $\wt{g}$.
\end{itemize}
It then follows from Proposition \ref{pro:rigidity} that all maps in $W-(M_{n,i,j}\bigcup_{n,i,j} M_{n,i,\infty})$ are quasi-conformally conjugate on the neighborhoods of their Julia sets together with the Fatou components not in the roots-basins. Since the maps in $W$ are \pf in their attracting basins,  these quasi-conformal conjugacies can be extended to the sphere.
Hence $$W\subseteq QC_{\tiny{\R}}(\wt{f})\cup(\bigcup_{n,i,j}(M_{n,i,j}\cup M_{n,i,\infty}))$$ with some $\wt{f}\subseteq W$, where $QC_{\tiny{\R}}(\wt{f}):=QC(\wt{f})\cap \NNN_d(\R)$.  By Lemma \ref{lem:dimension}, $QC_{\tiny{\R}}(\wt{f})$ has (real) dimension at most $s-1$, so $W$ is a countable union of sub-manifolds of codimension at least one,
impossible.
The proof of the claim is then completed.

Therefore, we obtain a real analytic sub-manifold $W_1$ embedded in $W$ of dimension $s-1$, on which one of the critical relation
\[g^{q_1}(c_{r+i_1}(g))=c_{r+j_1}(g) \text{ or\ \ } g^{q_1}(c_{r+i_1}(g))=\infty\]
is persisting.
 As previous, for positive integers $n$ and $1\leq i,j\leq s$, consider
\[M_{n,i,j}^{(1)}:=M_{n,i,j}\cap W_1\quad\text{and}\quad M_{n,i,\infty}^{(1)}=M_{n,i,\infty}\cap W_1.\]
For a map $g\in W_1$, if this persisting critical relation happens in the Julia set, the critical points contained in the parabolic domain is at most $s-1$; otherwise, the number of grand orbits of critical points in the parabolic domain is at most $s-1$.
Then, using the same argument as that in the claim above, we get a sub-manifold $W_2$ embedded in $W_1$ of dimension $s-2$, on which two critical relations are persisting. Inductively, for every $1\leq k\leq s$, we get a sub-manifold $W_k\subseteq W$ of dimension $s-k$ such $k$ distinct critical relations are persisting in $W_k$. Finally, when $k=s$, we get a  sub-manifold $W_s\subseteq W$ of $0$-dimension, i.e., a map $g\in W$. Note that all critical points $c_{r+1}(g),\ldots,c_{r+s}(g)$ must be iterated to $\infty$ because the persisting relations contains no cycles (otherwise $\tau(X)$ is not constant). So $g$ is \pf.

In \cite{AR,AR2}, the author proved that any Misiurewicz rational maps (including \pf ones) can be approximated by hyperbolic rational maps. Using  their argument, we will find a map near $g$ in $W$ with at leas one more critical points in the attracting basins than $g$. It contradicts $\tau(X)$ is constant, completing the proof of Theorem \ref{thm:main}.

By \cite[Theorem 3.2]{LSS}, the maps in $W$ near $g$ can be parameterized by $B_0(1):=\{v=(v_1,\ldots,v_s)\in\R^s:\sum_{i=1}^s|v_i|<1\}$, and $g_0=g$.  When $\R$ intersects the Fatou set of $g$, let $I\subseteq \R\cap\FFF_g$ be a closed interval. Then there exists $\delta_0>0$ such that $I$ is contained in the attracting basin of $g_{v}$ for all $v\in B_0(\delta_0)$.  It is proved in \cite{AR2} that, for any $0<\delta<\delta_0$, there exists large $n=n_\delta$, such that the set $\{g_v^n(c_1(v))\mid v\in B_0(\de)\}$ intersects $I$. This means that there exists a $v_*\in B_0(\de)$ such that $g_{v_*}^n(c_1(v_*))\subseteq \FFF_{g_*}$, and hence $\tau(g_*)> \tau(g)$.
If $\R\subseteq \JJJ_g$,
it is proved in \cite{AR} that, for any $\de>0$, there exists a large $n$, such that the set $\{g_{v}^n(c_1(v))\mid v\in B_0(\de)\}$ covers the set $\{c_1(v)\mid v\in B_0(\de)\}$. It follows that there exists a $v_*\in B_0(\de)$ such that $g_{v_*}^n(c_1(v_*))=c_1(v_*)$. Hence $g_{v_*}$ is a map in $W$ with $\tau(g)>\tau(g)$.
\end{proof}

\vspace{1cm}

\noindent Yan Gao, \\
Mathemaitcal School  of Sichuan University, Chengdu 610064,
P. R. China. \\
Email: gyan@scu.edu.cn

\end{document}